\newtheorem{lemma}{Lemma}[section]
\newtheorem{theorem}{Theorem}[section]
\newtheorem{corollary}{Corollary}[section]
\newtheorem{proposition}{Proposition}[section]
\newtheorem{example}{Example}[section]
\begin{document}
\title{\Large{\sc{On Polynomial Optimization over Non-compact Semi-algebraic Sets}}
}

\author{V. Jeyakumar,\footnote{Department of Applied Mathematics, University of New South Wales,
Sydney 2052, Australia. E-mail: v.jeyakumar@unsw.edu.au} \quad J.B. Lasserre \footnote{LAAS-CNRS and Institute of Mathematics, LAAS, France, E-mail: lasserre@laas.fr. The work of this author was partially done while he was a Faculty of Science Visiting Fellow at UNSW}  \, and \, G. Li \footnote{Department of Applied Mathematics, University of New South Wales,
Sydney 2052, Australia. E-mail: g.li@unsw.edu.au} }

\date{\today}
\maketitle


\maketitle

\begin{abstract}
We consider the class of polynomial optimization problems
$\inf \{f(x):x\in K\}$ for which the quadratic module
generated by the polynomials that define $K$ and the polynomial $c-f$ (for some scalar $c$) is Archimedean.
For such problems, the optimal value can be approximated as closely as desired by solving a
hierarchy of semidefinite programs and the convergence is finite generically. Moreover, the Archimedean condition
(as well as a sufficient coercivity condition) can also
be checked numerically by solving a similar hierarchy of semidefinite programs.
In other words, under reasonable assumptions
the now standard hierarchy of SDP-relaxations extends to the non-compact case via a
suitable modification.
\medskip

\noindent\textbf{Key words.} Polynomial optimization, non-compact semi-algebraic sets, semidefinite programming relaxations, Positivstellensatz\"e


\end{abstract}
\newpage
\section{Introduction}
The last decade has seen several developments in polynomial optimization. In
particular, a systematic procedure has been established to solve Polynomial Optimization
Problems (POP) on compact basic semi-algebraic sets. It consists of a
hierarchy of (convex) semidefinite relaxations of increasing size whose associated
sequence of optimal values is monotone nondecreasing and converges to the global
optimum. The proof of this convergence is based on powerful theorems from real
algebraic geometry on the representation of polynomials that are positive on a basic
semi-algebraic set, the so-called Positivstellensatze of Schm\"{u}dgen \cite{Schmudgen} and Putinar \cite{Putinar}.

Under mild assumptions the convergence has been proved to be finite for the class
of convex POPs and even at the first step of the hierarchy for the subclass of
convex POPs defined with SOS-convex polynomials\footnote{A polynomial $f$ is SOS-convex if its Hessian $\nabla^2 f(x)$ factors as $L(x)L(x)^T$ for some matrix
polynomial $L(x)$.}. In addition, as recently proved by Nie \cite{Nie1} and Marshall \cite{marshall},
finite convergence is {\it generic} for POPs on compact basic semi-algebraic sets.

However, all the above results hold in the compact case, i.e., when the feasible set
$K$ is a compact basic semi-algebraic set and (for the most practical hierarchy)
its defining polynomials satisfy an additional Archimedean assumption.
A notable exception is the case of SOS-convex POPs for which convergence is finite
even if $K$ is not compact (and of course if $f$ has a minimizer in $K$).

When the feasible set is a non compact basic semi-algebraic set, Schm\"udgen and Putinar's Positivstellensatz\"e
do not hold any more and in fact, as shown in Scheiderer \cite{Scheiderer}, there
are fundamental obstructions to such representations in the noncompact case.
The non compact case $K=\mathbb{R}^n$ reduces to the compact case if one guesses a
ball in which a minimizer exists or one may optimize over the gradient ideal
via the specialized hierarchy proposed in Nie et al. \cite{nie-gradient}. In both cases one assumes that a minimizer exists
which can be enforced if instead one minimizes an appropriate perturbation of the initial polynomial $f$
as proposed in Hanzon and Jibetean \cite{hanzon} and Jibetean and Laurent \cite{jibetean}.
To avoid assuming existence of a minimizer Schweighofer \cite{schweighofer} introduced
the notion of gradient tentacle along with
an appropriate hierarchy of SDP relaxations, later improved
by H\`a  and Vui \cite{tien-son} who instead use the truncated tangency variety. Remarkably, both hierarchies converge
to the global minimum even if there is no minimizer.

On the other hand the so-called Krivine-Stengle Positivstellensatz
provides a certificate of positivity even in the non compact case. Namely it states that a polynomial $f$ is positive on $K$ if and only if $pf = 1+q$ for some polynomials
$p$ and $q$ that both must belong to the preordering associated with the polynomials
that define $K$.
However, the latter representation is not practical for two reasons: Firstly, requiring that $p$ and $q$ belong to the preordering introduces $2^{m+1}$ unknown
SOS polynomials (as opposed to $m+1$ SOS polynomials in Putinar's Positivstellensatz
for the compact case). And so, for example, given a polynomial $f$, checking whether or not $pf = 1 + q$ for some polynomial $p, q$ in the preordering, is very costly from a computational viewpoint. Secondly, as the unknown polynomial $p$ multiplies $f$, this representation is not
practical for optimization purposes when $f$ is replaced with $f -\lambda$ where $\lambda$ has to be
maximized. Indeed one cannot define a hierarchy of semidefinite programs because
of the nonlinearities introduced by the product $p$.

\subsection*{Contribution}

In the present paper we consider the POP
\[\mathbf{P}:\qquad f^*\,=\,\inf \,\{f(x)\::\: x\,\in\,K\},\]
for a possibly non-compact basic semi-algebraic
set
$$K := \{x \in\mathbb{R}^n: g_j(x) \le 0; \:j = 1,\ldots, m; \quad h_l(x) = 0; \:l = 1\dots,r\},$$
for some polynomials $(f,g_j,h_l)\subset\mathbb{R}[x]$.
We assume that $\mathbf{P}$ is well-posed in the sense that
$f^*= f(x^*)$ for at least one minimizer $x \in K$.
A typical counter example is $f(x,y) = \inf\{x + (1-xy)^2 :
x \ge 0\}$ where $f^* = 0$ is not attained and $(\frac{1}{k},k)$ is an unbounded minimizing sequence.

An important class of well-posed POPs are those for which $\tilde{K}:=\{x\in K : f(x) \le c \}$
is compact, where $c \ge f(x_0)$ for some $x_0 \in K$. In particular notice that this holds
true with $c = f^*$ whenever $f$ has finitely many (global) minimizers in $K$ (the generic
case). \\

Our contribution is to show that if the quadratic module,
$$M(g; h; c-f)\:\left( := M(g_1,\ldots,g_m; h_1,\ldots,h_r; c -f)\right)$$
generated by the $g_j$'s, the $\pm h_l$'s and the polynomial $c - f$, is Archimedean, then
one may approximate as closely as desired the optimal value $f^*$ of $\mathbf{P}$.
Indeed it suffices to replace $K$ with $\tilde{K}:=K\cap\{x: c-f(x)\geq0\}$ and apply the
the associated standard hierarchy of semidefinite relaxations defined for the compact case.
That is, one solves the hierarchy of semidefinite programs:
\begin{equation}\label{H}
f_k = \sup  \{\lambda: f -\lambda \in  M_k(g; h; c-f) \},\qquad k\in\mathbb{N},
\end{equation}
where $M_k(g; h; f)$ is the restricted version of the quadratic module $M(g; h; c-f)$ in
which the polynomial weights have a degree bound that depend on $k$.
And if the quadratic module $M(g;h;c-f)$ is Archimedean
then the monotone convergence $f_k\uparrow f^*$ as $k\to\infty$, is guaranteed\footnote{The hierarchy of semidefinite programs (\ref{H}) was studied in \cite{Pham} for convex POPs for which the monotone convergence, $f_k\uparrow f^*$ as $k\to\infty$, has been proved to be true always (i.e. without any regularity condition).}
and, is finite generically. Moreover,
in such a context our result is to be interpreted as a simplified version of the celebrated Krivine-Stengle Positivstellensatz.

From a mathematical point of view this is a relatively straightforward extension as it
reduces the non compact case to the compact case by using $\tilde{K}$ instead of $K$.
However, one main goal of the paper
is to show that, under some numerically checkable assumptions,
the standard hierarchy of SDP relaxations defined for the compact case
indeed can be adapted to the non compact case modulo a slight modification.
For instance, when $f$ is coercive, the set $\{x \in \mathbb{R}^n : c -f(x) \ge 0\}$ is compact
(and so the Archimedean condition is satisfied). This coercivity condition of the objective function $f$ to minimize is very natural in
many POPs as it simply means that $f(x)$ grows to infinity as $\|x\|\to \infty$ (e.g. when $f$ is a strongly convex polynomial).

Importantly, we also show that the Archimedean and coercivity conditions can be
checked numerically by solving a hierarchy of semidefinite programs until
some test is passed.


\section{Preliminaries}

We write $f \in \mathbb{R}[x]$ (resp. $f \in \mathbb{R}_d[x]$) if $f$ is a real polynomial on $\mathbb{R}^n$ (resp. $f$ is a real polynomial with degree at most $d$).
One can associate the $l_1$-norm on $\mathbb{R}[x]$ defined by $\|f\|_1=\sum_{\alpha}|f_{\alpha}|$ where $f(x)=\sum_{\alpha}f_{\alpha}x^{\alpha}$.
We say that a real polynomial $f$ is sum of squares (SOS) (cf. \cite{Survey}) if there exist real polynomials $f_j$, $j=1,\ldots,r$, such that $f=\sum_{j=1}^rf_j^2$.
 The set of all sum of squares real polynomials is denoted by
 $\Sigma^2[x]$ while $\Sigma^2_d[x]\subset\Sigma^2[x]$ denotes its subset
  of all sum of squares of degree  at most $2d$.\\

\noindent
{\bf Coercivity.} We say that a polynomial $f$ is coercive if $\lim_{\|x\| \rightarrow +\infty}f(x)=+\infty$. Clearly, a coercive polynomial must be of even degree. A typical example of a coercive polynomial is that
 $f(x)=\sum_{i=1}^na_ix_i^{2d}+g(x)$ where $a_i>0$ and $g\in\mathbb{R}[x]_{2d-1}$.
This fact also implies that the set of coercive polynomials
is {\em dense} in $\mathbb{R}[x]$ for $l_1$-norm because, for any $f \in \mathbb{R}[x]$ with degree $p$, $f+\epsilon \sum_{i=1}^nx_i^{2p}$ is coercive.\\

\noindent
{\bf Archimedean property.}
With a semi-algebraic set $K$ defined as
\begin{equation}
\label{set-K}
K\,:=\,\{x\::\:g_j(x) \ge 0,\:j=1,\ldots,m;\quad  h_l(x)=0, \:l=1,\ldots,r\},
\end{equation}
is associated
its {\em quadratic module} $M(g;h)=M(g_1,\ldots g_m;h_1,\ldots,h_r)\subset\mathbb{R}[x]$ defined as
\begin{eqnarray*}
M(g;h) &:=& \{\sigma_0+ \sum_{j=1}^m\sigma_j(x)g_j(x)+\sum_{l=1}^r\phi(x)h_l(x) : \\
& &  \sigma_j\,\in\,\Sigma^2[x],\: j=0,1,\ldots,m;\quad \phi_l \in \mathbb{R}[x], \, l=1,\ldots,r\}.
\end{eqnarray*}
For practical computation we also have the useful truncated version $M_k(g;h)$ of $M(g;h)$, i.e.,
letting $v_j:=\lceil{\rm deg}(g_j)/2\rceil$, $j=1,\ldots,m$, and $w_l:={\rm deg}(h_l)$, $l=1,\ldots,r$,
\begin{eqnarray}
\label{useful}
M_k(g;h) &:=& \{\sigma_0+ \sum_{j=1}^m\sigma_j(x)g_j(x)+\sum_{l=1}^r\phi(x)h_l(x) : \\
\nonumber
& &  \sigma_j\,\in\,\Sigma^2[x]_{k-v_j},\: j=0,1,\ldots,m;\quad \phi_l \in \mathbb{R}[x]_{2k-w_l}, \, l=1,\ldots,r\}.
\end{eqnarray}

Of course, membership in $M(g;h)$ provides immediately with
a certificate of non negativity on $K$. The quadratic module $M(g;h)$
is called {\em Archimedean} if there exists a polynomial $p \in M(g;h)$ such that the superlevel set
$\{x \in \mathbb{R}^n: p(x)\ge 0\}$ is compact. An equivalent definition is that there exists $R>0$ such that
$x\mapsto R-\sum_{i=1}^nx_i^2 \in M(g;h)$. Observe that  if $M(g;h)$ is Archimedean then the set  $K$ is compact.

The following important result on the representation of polynomials that are positive on $K$ is from Putinar \cite{Putinar}.

\begin{theorem}{\bf (Putinar Positivstellensatz \cite{Putinar})}
\label{Putinar}
 Let $K\subset\mathbb{R}^n$ be as in (\ref{set-K}) and assume that $M(g;h)$ is Archimedean.
 If $f\in\mathbb{R}[x]$ is positive on $K$ then $f\in M(g;h)$.
 \end{theorem}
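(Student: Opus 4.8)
The plan is to reproduce the classical operator--theoretic proof (essentially Schm\"udgen's construction, in Putinar's setting). Assuming $f\notin M(g;h)$, I would manufacture a probability measure supported on $K$ against which the integral of $f$ is nonpositive, contradicting $f>0$ on $K$.

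First I would extract the two working consequences of the Archimedean hypothesis. Since $N-\sum_{i}x_i^2\in M(g;h)$ for some $N>0$, the set $\{p\in\mathbb{R}[x]:\exists\,\lambda>0,\ \lambda\pm p\in M(g;h)\}$ is a subalgebra of $\mathbb{R}[x]$ containing $1$ and all coordinates $x_i$, hence is all of $\mathbb{R}[x]$; so every polynomial $p$ admits $\lambda>0$ with $\lambda\pm p\in M(g;h)$. In particular $1$ is an algebraic interior point of the convex cone $M(g;h)$, and the cone is proper (otherwise $K=\emptyset$ and there is nothing to prove). Also $K$ is compact, as already observed. Now, if $f\notin M(g;h)$, an Eidelheit/Hahn--Banach separation of $f$ from the cone $M(g;h)$ --- legitimate because $1$ is an interior point --- produces a linear functional $L:\mathbb{R}[x]\to\mathbb{R}$ with $L\ge0$ on $M(g;h)$, $L(f)\le0$, and (after normalization, using that $1$ is interior) $L(1)=1$. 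Since $\Sigma^2[x]\subseteq M(g;h)$, $\sigma g_j\in M(g;h)$ for $\sigma\in\Sigma^2[x]$, and $\pm\phi\,h_l\in M(g;h)$ for $\phi\in\mathbb{R}[x]$, we obtain $L(p^2)\ge0$, $L(p^2 g_j)\ge0$, and $L(p^2 h_l)=0$ for every $p\in\mathbb{R}[x]$ and all $j,l$.

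The core step is to realize $L$ as integration against a measure. Equip $\mathbb{R}[x]/\mathcal{N}$, with $\mathcal{N}=\{p:L(p^2)=0\}$ and $\langle p,q\rangle:=L(pq)$, and let $H$ be its Hilbert-space completion. Using $(\lambda-\sum_i x_i^2)p^2\in M(g;h)$ one shows each multiplication operator $p\mapsto x_i p$ is bounded on $\mathbb{R}[x]/\mathcal{N}$ (with $\|X_i\|^2\le\lambda$), hence extends to a bounded self-adjoint operator on $H$; the $X_i$ pairwise commute, so the multivariate spectral theorem yields a joint (compactly supported) projection-valued measure $E$ on $\mathbb{R}^n$. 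Setting $\mu(\cdot):=\langle E(\cdot)\mathbf{1},\mathbf{1}\rangle$ gives a positive measure with $L(p)=\int p\,d\mu$ for all $p\in\mathbb{R}[x]$ and $\mu(\mathbb{R}^n)=L(1)=1$. From $\int p^2 g_j\,d\mu\ge0$ for all $p$ one deduces (polynomials being dense in $L^2(|g_j|\,d\mu)$ since $\mu$ is compactly supported) that $g_j\ge0$ $\mu$-a.e., and similarly $\int p^2 h_l\,d\mu=0$ for all $p$ forces $h_l=0$ $\mu$-a.e.; hence $\mathrm{supp}\,\mu\subseteq K$. Finally, since $K$ is compact and $f>0$ on $K$, the constant $m_0:=\min_K f$ is positive, so $L(f)=\int f\,d\mu\ge m_0\,\mu(\mathbb{R}^n)=m_0>0$, contradicting $L(f)\le0$. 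Therefore $f\in M(g;h)$.

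I expect the genuinely delicate part to be the operator-theoretic core: proving boundedness of the $X_i$ from the Archimedean inequality and invoking the commuting spectral theorem to build $\mu$, then the almost-everywhere positivity/vanishing argument that confines $\mathrm{supp}\,\mu$ to $K$. The separation step and the Archimedean bookkeeping are comparatively routine. (A purely algebraic alternative is to deduce the result from the Kadison--Dubois representation theorem for archimedean preordered rings, which trades the Hilbert-space machinery for an equally careful order-theoretic argument; either way the substantive content is the passage from a positive functional on the Archimedean module to a representing measure on $K$.)
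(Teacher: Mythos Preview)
The paper does not prove Theorem~\ref{Putinar}; it is quoted as a background result from \cite{Putinar} and used as a black box (the paper's own contributions begin with Theorem~\ref{th:2.2}). So there is no ``paper's proof'' to compare against.

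Your proposal is the standard operator--theoretic/GNS argument and is essentially correct. A couple of small remarks. First, your parenthetical ``the cone is proper (otherwise $K=\emptyset$ and there is nothing to prove)'' is slightly elliptical: what you actually need is that if $M(g;h)$ is not proper then $-1\in M(g;h)$, and then the Archimedean property forces $M(g;h)=\mathbb{R}[x]$ (for any $p$, pick $\lambda>0$ with $\lambda+p\in M$ and note $-\lambda=\lambda\cdot(-1)\in M$), so $f\in M$ trivially; the detour through $K=\emptyset$ is unnecessary. Second, the density step you invoke to pass from $\int p^2 g_j\,d\mu\ge0$ for all $p$ to $g_j\ge 0$ $\mu$-a.e.\ is usually argued directly from the functional calculus/spectral measure rather than via polynomial density in $L^2(|g_j|\,d\mu)$: one shows the spectrum of $g_j(X_1,\dots,X_n)$ is contained in $[0,\infty)$, which immediately gives $g_j\ge0$ on $\mathrm{supp}\,\mu$. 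Your density route also works since $\mu$ has compact support, but you should be explicit about why that implies polynomial density in the weighted $L^2$ space. None of this affects the validity of the overall argument.
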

 An even more powerful result due to Krivine-Stengle is valid on more general semi-algebraic set (not necessarily compact).
Denote by $P(g;h)\subset\mathbb{R}[x]$ the {\em preordering} associated with $K$ in (\ref{set-K}), i.e., the set defined by
 \[P(g;h)\,:=\,\left\{\sum_{\alpha\in \{0,1\}^m} \sigma_\alpha \,g_1^{\alpha_1}\ldots g_m^{\alpha_m}+\sum_{l=1}^r \phi_l h_l: \sigma_\alpha \in \Sigma^2[x], \phi_l \in \mathbb{R}[x]\right\}.
\]
\begin{theorem}{\bf (Krivine-Stengle Positivstellensatz)}
\label{Krivine-Stengle}
Let $K\subseteq\mathbb{R}^n$ be as in (\ref{set-K}).

(i) If  $f\in\mathbb{R}[x]$ is nonnegative on $K$ then $pf=f^{2s}+q$ for some
$p,q\in P(g;h)$ and some integer $s$.

(ii) If  $f\in\mathbb{R}[x]$ is positive on $K$ then $pf=1+q$ for some
$p,q\in P(g;h)$.
\end{theorem}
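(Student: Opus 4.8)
The plan is to obtain this classical theorem of Krivine and Stengle from the \emph{abstract Positivstellensatz} for preorderings in commutative rings, applied to $A=\mathbb{R}[x]$ and $T:=P(g;h)$, combined with a transfer principle linking the algebraic ``sign locus'' of $T$ to the set $K$. For a preordering $T$ of a commutative ring $A$, let $Z_T$ be the set of orderings of $A$ (points $\alpha$ of its real spectrum, with ordered residue field $k(\alpha)$) that contain $T$; the abstract Positivstellensatz states, for $a\in A$: (a) $Z_T=\emptyset\iff -1\in T$; (b) $a$ vanishes on $Z_T\iff -a^{2m}\in T$ for some $m$; (c) $a\ge 0$ on $Z_T\iff ap=a^{2m}+q$ for some $p,q\in T$ and some $m$; (d) $a>0$ on $Z_T\iff ap=1+q$ for some $p,q\in T$. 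Parts (i) and (ii) of the theorem are then exactly (c) and (d) applied with $a=f$, $A=\mathbb{R}[x]$, $T=P(g;h)$ --- provided I first show that nonnegativity (resp.\ positivity) of $f$ on $K$ forces the same on $Z_T$. The ``$\Leftarrow$'' directions of (c) and (d) are immediate: evaluate the identity in the ordered field $k(\alpha)$ for $\alpha\in Z_T$ and use $p(\alpha)\ge 0$.

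The engine is the extension lemma: \emph{a preordering $T$ of $A$ with $-1\notin T$ is contained in some $\alpha\in Z_T$.} I would prove it with Zorn's lemma: take $P\supseteq T$ maximal among preorderings omitting $-1$; then $P\cup(-P)=A$ (if $a\notin P$ and $-a\notin P$, maximality gives $-1\in P+aP$ and $-1\in P-aP$, and multiplying the two resulting identities produces $-1\in P$), and $\mathrm{supp}(P):=P\cap(-P)$ is a prime ideal (being an ideal follows at once from $P\cup(-P)=A$; primeness from a short argument using maximality), so $P$ is an ordering. Granting this, (a) is a restatement, and (b), (c), (d) follow by standard manipulations: for (d), if $f>0$ on $Z_T$ but no identity $fp=1+q$ holds, the preordering $T'=\{s-ft:s,t\in T\}$ generated by $T$ and $-f$ omits $-1$ (a relation $-1=s-ft$ is precisely the forbidden identity), hence extends to some $\alpha\in Z_{T'}\subseteq Z_T$ with $f(\alpha)\le 0$, a contradiction; for (b), one localizes $A$ at $\{f^{2k}\}$ (a domain, so no vanishing issues arise), extends the localized preordering to an ordering $\beta$ in which $f$ is a unit, and contracts $\beta$ to $A$; and (c) follows from (b) applied to $T+(-f)T$ together with the same extension step.

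For the transfer, suppose $f>0$ on $K$ (the case $f\ge 0$ is identical with strict replaced by non-strict below) but $f(\alpha)\le 0$ for some $\alpha\in Z_T$, $T=P(g;h)$. Let $\mathfrak p=\mathrm{supp}(\alpha)$, a prime ideal of $\mathbb{R}[x]$; since $\pm h_l\in T\subseteq\alpha$ we have $h_l\in\mathfrak p$, and since $g_j\in T\subseteq\alpha$ the image of $g_j$ is $\ge 0$ in the ordered fraction field $F$ of $\mathbb{R}[x]/\mathfrak p$. Let $R\supseteq F$ be a real closure. Then the images $\bar x_1,\dots,\bar x_n\in R$ of the coordinates satisfy $g_j(\bar x)\ge 0$ for all $j$, $h_l(\bar x)=0$ for all $l$, and $f(\bar x)\le 0$. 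Hence the first-order sentence $\exists x\,(\bigwedge_j g_j(x)\ge 0\ \wedge\ \bigwedge_l h_l(x)=0\ \wedge\ f(x)\le 0)$, all of whose parameters are real, holds in the real closed field $R$; by model completeness of the theory of real closed fields (Tarski-Seidenberg) it holds in $\mathbb{R}$, yielding a point of $K$ at which $f\le 0$, contradicting $f>0$ on $K$. Thus $f>0$ on $Z_T$, and (d) gives $fp=1+q$ with $p,q\in P(g;h)$; the same scheme with $f(\alpha)<0$ in place of $f(\alpha)\le 0$ handles (i) via (c).

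The main obstacle is the abstract Positivstellensatz, and inside it the extension lemma --- specifically, verifying that a maximal proper preordering has prime support, the one step that goes beyond routine bookkeeping; some care is also needed in tracking supports through the localization used for (b) and through the auxiliary preordering $T+(-f)T$ used for (c). The transfer step, by contrast, is routine once model completeness of real closed fields (equivalently, the Artin-Lang homomorphism theorem) is invoked.
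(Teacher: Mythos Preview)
Your proposal is a correct and essentially standard route to the Krivine--Stengle Positivstellensatz: reduce to the abstract Positivstellensatz for preorderings via the extension lemma (Zorn), and then transfer from the real spectrum to $\mathbb{R}^n$ using model completeness of real closed fields (equivalently, Artin--Lang). The individual steps you sketch---the maximality argument that a proper preordering extends to an ordering, the derivation of (c) from (b) applied to $T-fT$, the localization for (b), and the Tarski transfer---are all sound as outlined.

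However, there is nothing to compare against: the paper does \emph{not} prove this theorem. Theorem~\ref{Krivine-Stengle} is stated as a classical background result and used as motivation (in particular, to set up the formulation (\ref{bad}) and to contrast with the simplified certificate of Corollary~\ref{coro-1}); no proof or proof sketch is given. So your write-up supplies something the paper deliberately omits. If you intend to include it, you might streamline by citing the abstract Positivstellensatz and Tarski--Seidenberg as black boxes rather than re-deriving the extension lemma, since those are textbook results in real algebraic geometry.
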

Notice the difference between Putinar and Krivine-Stengle certificates.
On the one hand, the latter is valid for non-compact sets $K$ but requires knowledge of
two elements $p,q\in P(g;h)$, i.e., $2^{m+1}$ SOS polynomial weights
associated with the $g_j$'s and $2r$ polynomials associated with the $h_l$'s,
in their representation. On the other hand, the former is valid
only for compact sets $K$ with the Archimedean property, but it requires knowledge of only
$m+1$ SOS weights and $r$ polynomials.

\section{Main Results}
With $K$ as in (\ref{set-K}) and $f\in\mathbb{R}[x]$, let $d:=\max[{\rm deg}\,f;{\rm deg}\,g_j;{\rm deg}\,h_l]$ and let $s(d):={n+d\choose n}$.
We say that a property holds {\it generically} for $f,g_j,h_l\in\mathbb{R}[x]_d$
if the coefficients of the polynomials $f,g_j,h_l$ (as vectors of $\mathbb{R}^{s(d)}$)
do not satisfy a system of finitely many polynomial equations. Equivalently,
if the coefficients of $f,g_j,h_l$ belong to an open Zariski subset of
$\mathbb{R}^{(1+m+r)s(d)}$.\\

A consequence of Theorem \ref{Krivine-Stengle} is that, computing the global minimum of $f$
on $K$ reduce to solving the optimization problem
\begin{eqnarray}
\nonumber
f^*&=&\sup_\lambda\{\lambda\::\: \mbox{$f-\lambda >0$ on $K$}\}\\
\label{bad}
&=&\sup_{p,q,\lambda}\{\lambda\::\: p(f-\lambda)=1+q;\quad p,\,q\in P(g;h)\}.
\end{eqnarray}
But as already mentioned for Krivine-Stengle's Positivstellensatz, the above formulation (\ref{bad}) is
not appropriate because of the product $p\lambda$ which precludes from reducing
(\ref{bad}) to a semidefinite program. Moreover there are $2^{m+2}$ SOS polynomial weights in the definition of $p$
and $q$ in (\ref{bad}).
However, inspired by Theorem \ref{Krivine-Stengle} we now provide sufficient conditions on the data
$(f,g,h)$ of problem $\mathbf{P}$ to provide a converging hierarchy of semidefinite programs
for solving $\mathbf{P}$.

\begin{theorem}\label{th:2.2}
Let $\emptyset\neq K\subset\mathbb{R}^n$ be as in (\ref{set-K}) for some
polynomials $g_j,h_l$, $j=1,\ldots,m$ and $l=1,\ldots,r$.
Let $x_0 \in K$ and let $c \in \mathbb{R}_+$ be such that $c>f(x_0)$.
Suppose that the quadratic module $M(g;h;c-f)$ is
Archimedean. Then:
\begin{eqnarray}\label{eq:hierachy}
\nonumber
f^*&=&\inf_x \,\{f(x)\::\:x \in K\}\\
\label{good}
&=&\sup_{\lambda}\,\{\lambda\::\: f-\lambda\,\in\,M(g;h;c-f)\,\} \nonumber \\
& = & \lim_{k \rightarrow \infty} \sup_{\lambda}\{\lambda\::\:f-\lambda\,\in\,M_k(g;h;c-f)\}.
\end{eqnarray}
Moreover $f^*=f(x^*)$ for some $x^*\in K$, and generically
\begin{equation}
\label{finite-conv}
f^*=\max_{\lambda}\,\{\lambda\::\: f-\lambda\,\in\,M_k(g;h;c-f)\,\},
\end{equation}
for some index $k$. That is, $f^*$ is obtained after solving finitely many semidefinite programs.
\end{theorem}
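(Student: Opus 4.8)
The plan is to reduce $\mathbf P$ to polynomial optimization over the \emph{compact} set $\tilde K:=K\cap\{x:c-f(x)\ge0\}$ and then to apply Putinar's Positivstellensatz (Theorem~\ref{Putinar}) together with the existing genericity results for the compact hierarchy.

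First I would record the basic properties of $\tilde K$. Since $x_0\in K$ with $f(x_0)<c$, we have $x_0\in\tilde K$, so $\tilde K\neq\emptyset$. The set $\tilde K$ is exactly the basic semi-algebraic set defined by $g_1,\dots,g_m$, $h_1,\dots,h_r$ and the extra inequality $c-f\ge0$, and its associated quadratic module (in the sense of the preceding section) is precisely $M(g;h;c-f)$; since the latter is Archimedean, $\tilde K$ lies in a ball $\{x: R-\sum_{i=1}^nx_i^2\ge0\}$ and is therefore compact. Any $x\in K\setminus\tilde K$ satisfies $f(x)>c>f(x_0)$, so it cannot lower the infimum; hence $\inf\{f(x):x\in K\}=\inf\{f(x):x\in\tilde K\}$ and the two problems have the same set of global minimizers. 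By continuity of $f$ on the nonempty compact set $\tilde K$ this common value is attained at some $x^*\in\tilde K\subseteq K$, which gives $f^*=f(x^*)$.

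Next I would prove the two algebraic identities. Every member of $M(g;h;c-f)$ is nonnegative on $\tilde K$, so $f-\lambda\in M(g;h;c-f)$ forces $\lambda\le f(x)$ for all $x\in\tilde K$; hence the supremum in the second line, as well as each truncated value $f_k:=\sup\{\lambda:f-\lambda\in M_k(g;h;c-f)\}$, is $\le f^*$. For the reverse inequality, fix $\varepsilon>0$; then $f-(f^*-\varepsilon)\ge\varepsilon>0$ on $\tilde K$, so Theorem~\ref{Putinar} applied to $\tilde K$ (compact, Archimedean quadratic module) yields $f-(f^*-\varepsilon)\in M(g;h;c-f)$, and this representation has weights of bounded degree, so $f-(f^*-\varepsilon)\in M_k(g;h;c-f)$ for all sufficiently large $k$. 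Since the $M_k(g;h;c-f)$ are nested with union $M(g;h;c-f)$, the sequence $(f_k)$ is nondecreasing, bounded above by $f^*$, and eventually exceeds $f^*-\varepsilon$; letting $\varepsilon\downarrow0$ gives $\sup\{\lambda:f-\lambda\in M(g;h;c-f)\}=f^*=\lim_{k\to\infty}f_k$.

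Finally, for the generic finite convergence \eqref{finite-conv} I would invoke Nie \cite{Nie1} (see also Marshall \cite{marshall}). Generically in the coefficients of $(f,g_j,h_l)$, the problem $\mathbf P$ has finitely many global minimizers and at each of them the optimality conditions used in \cite{Nie1} (linear independence of the active constraint gradients, strict complementarity, and the second-order sufficient/boundary Hessian condition relative to $g,h$) are satisfied. The key observation is that at any global minimizer $x^*$ one has $f(x^*)=f^*\le f(x_0)<c$, so the additional constraint $c-f\ge0$ defining $\tilde K$ is \emph{inactive} at $x^*$; therefore exactly the same optimality conditions hold for the POP $\min\{f(x):x\in\tilde K\}$ with the description $(g,h,c-f)$. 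Since $\tilde K$ is compact and $M(g;h;c-f)$ is Archimedean, the deterministic finite-convergence theorem of \cite{Nie1} applies to this POP and gives $f^*=\max\{\lambda:f-\lambda\in M_k(g;h;c-f)\}$ for some finite $k$. I expect this transfer of genericity to be the main obstacle: because the extra generator $c-f$ is not independent of the objective $f$, one cannot directly quote ``finite convergence is generic'' for the augmented data $(f;g,h,c-f)$, and it is precisely the inactivity of $c-f$ at the minimizers that permits running the genericity argument only for the natural data $(f,g,h)$ and then passing to $\tilde K$.
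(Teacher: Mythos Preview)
Your proposal is correct and follows essentially the same route as the paper: reduce to the compact set $\tilde K=K\cap\{x:c-f(x)\ge0\}$, apply Putinar's Positivstellensatz for the convergence $f_k\uparrow f^*$, and invoke Nie~\cite{Nie1} for generic finite convergence. Your write-up is considerably more detailed than the paper's (which dispatches the whole proof in a few lines); in particular, your observation that the added constraint $c-f\ge0$ is \emph{inactive} at every global minimizer because $f^*\le f(x_0)<c$, and that therefore the optimality conditions required by Nie's theorem transfer from the data $(f,g,h)$ to the augmented data $(f,g,h,c-f)$, is a point the paper does not spell out but simply absorbs into the phrase ``invoking Nie, finite convergence takes place generically.''
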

\begin{proof}
It suffices to observe that
$f^*=\inf_x \{f(x):x \in K;\,c-f(x)\geq0\}$. And so if the quadratic module
$M(g;h;c-f)$ is Archimedean then the set $\tilde{K}:=K\cap \{x: c-f(x)\geq0\}$ is compact.
Therefore $f^*=f(x^*)$ for some $x^*\in\tilde{K}$. Moreover, $f_k\uparrow f^*$ as $k\to\infty$, where $f^*_k$ is the value of
the semidefinite relaxation (\cite{Lasserre2001,Lasserre2009})
\[f^*_k\,:=\,\sup\{\lambda\::\:f-\lambda\,\in\,M_k(g;h;c-f)\},\qquad k\in\mathbb{N}.\]
Finally, invoking Nie \cite{Nie1}, finite convergence takes place generically.
\end{proof}

On the other hand, in the case of a convex polynomial optimization problem $\mathbf{P}$, exploiting  the special structure of convex polynomials, the monotone convergence, $f_k\uparrow f^*$ as $k\to\infty$, has been proved in \cite{Pham} to be true always without any regularity condition.
\medskip

Note that in passing that if $f>0$ on $K$ then $f>0$ on $\tilde{K}=K\cap\{x:c-f(x)\geq0\}$, and so if
$M(g;h;c-f)$ is Archimedean then by Theorem \ref{Putinar},
\[f\,=\,\underbrace{\sigma_0+\sum_{j=1}^m\sigma_j\,g_j+\sum_{l=1}^r\phi_l\,h_l}_{q\in M(g;h)}+\psi(c-f),\]
for some $q\in M(g;h)$ and some SOS polynomial  $\psi\in\Sigma^2[x]$. Equivalently,
$(1+\psi)f=q+\underbrace{c\psi}_{SOS}$, i.e., $(1+\psi) f\in M(g;h)$ for some
SOS polynomial $\psi\in\Sigma^2[x]$.\\

In other words, one has shown:
\begin{corollary}
\label{coro-1}
Let $\emptyset\neq K\subset\mathbb{R}^n$ be as in (\ref{set-K}) for some
polynomials $g_j,h_l$, $j=1,\ldots,m$ and $l=1,\ldots,r$.
Let $x_0 \in K$ and let $c \in \mathbb{R}_+$ be such that $c>f(x_0)$.
Suppose that the quadratic module $M(g;h;c-f)$ is
Archimedean.

If $f>0$ on $K$ then $(1+\psi) f\in M(g;h)$ for some
SOS polynomial $\psi\in\Sigma^2[x]$.
And generically, if $f\geq0$ on $K$ then $(1+\psi) f\in M(g;h)$ for some
SOS polynomial $\psi\in\Sigma^2[x]$.
\end{corollary}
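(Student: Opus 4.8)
The plan is to derive both assertions from Putinar's Positivstellensatz (Theorem~\ref{Putinar}) applied to the compact set $\tilde K:=K\cap\{x:c-f(x)\ge0\}$, followed by the elementary rearrangement already carried out in the paragraph preceding the statement. Recall the two facts that make this work: $\tilde K$ is nonempty because $c>f(x_0)$ with $x_0\in K$, and $\tilde K$ is compact because $M(g;h;c-f)$ is Archimedean; moreover $M(g;h;c-f)$ is exactly the quadratic module attached to the description $\{g_j\ge0,\ h_l=0,\ c-f\ge0\}$ of $\tilde K$. The rearrangement is the following: whenever $f=\sigma_0+\sum_j\sigma_jg_j+\sum_l\phi_lh_l+\psi\,(c-f)$ with $\sigma_0,\sigma_j,\psi\in\Sigma^2[x]$ and $\phi_l\in\mathbb{R}[x]$, moving $\psi f$ to the left gives $(1+\psi)f=(\sigma_0+c\psi)+\sum_j\sigma_jg_j+\sum_l\phi_lh_l$, and since $c\ge0$ the polynomial $c\psi$, hence $\sigma_0+c\psi$, is again a sum of squares, so the right-hand side lies in $M(g;h)$.

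For the first assertion this is immediate: if $f>0$ on $K$ then $f>0$ on $\tilde K\subseteq K$, so Theorem~\ref{Putinar} yields a representation $f\in M(g;h;c-f)$ of the above form, and the rearrangement gives $(1+\psi)f\in M(g;h)$. For the generic assertion, suppose $f\ge0$ on $K$. Since $\tilde K$ is nonempty and compact, $m_0:=\min_{\tilde K}f$ is attained and $m_0\ge0$. If $m_0>0$, then $f-\tfrac{m_0}{2}>0$ on $\tilde K$, hence $f-\tfrac{m_0}{2}\in M(g;h;c-f)$ by Theorem~\ref{Putinar}, and adding the positive constant $\tfrac{m_0}{2}\in\Sigma^2[x]$ gives $f\in M(g;h;c-f)$; the rearrangement finishes this case. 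The remaining, genuine case is $m_0=0$, which by the computation in the proof of Theorem~\ref{th:2.2} means $f^*=\min_K f=0$. Here I would invoke Nie's genericity theorem \cite{Nie1} (see also \cite{marshall}): for $(f,g_j,h_l)$ outside a proper algebraic subset of $\mathbb{R}^{(1+m+r)s(d)}$, the constraint qualification, strict complementarity and second-order sufficient conditions hold at every minimizer of $f$ over $\tilde K$, and therefore $f-f^*\in M_k(g;h;c-f)$ for some $k$. With $f^*=0$ this reads $f\in M(g;h;c-f)$, and one more application of the rearrangement produces the desired SOS polynomial $\psi$ with $(1+\psi)f\in M(g;h)$.

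The step I expect to be the main obstacle is the genericity argument. Nie's theorem is ordinarily stated with the constraints held fixed and only the objective varying, whereas here the extra constraint $c-f\ge0$ defining $\tilde K$ involves $f$ itself, so perturbing $f$ also perturbs the feasible set. I expect this to be handled exactly as in the proof of Theorem~\ref{th:2.2}: once $c$ with $c>f(x_0)$ is fixed, treat $c-f$ as one additional defining polynomial of the compact basic semi-algebraic set $\tilde K$ whose quadratic module is Archimedean by hypothesis, and verify that the union of the exceptional polynomial conditions produced by Nie's proof still cuts out a proper subvariety of the joint coefficient space --- after which the conclusion holds on its complement.
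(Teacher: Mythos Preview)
Your proof is correct and follows essentially the same route as the paper: the paper's argument for the corollary is precisely the paragraph preceding it, namely Putinar on $\tilde K$ followed by the rearrangement $(1+\psi)f=q+c\psi\in M(g;h)$, with the generic clause inherited from Theorem~\ref{th:2.2} and its appeal to Nie~\cite{Nie1}. Your case split $m_0>0$ versus $m_0=0$ is a harmless elaboration (when $m_0>0$ you could apply Putinar to $f$ directly rather than to $f-m_0/2$), and your explicit flagging of the dependence of the extra constraint $c-f\ge0$ on $f$ in Nie's genericity argument is a point the paper simply does not address.
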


Thus Corollary \ref{coro-1}  can be regarded as a simplified form of Krivine-Stengle's Positivstellensatz
which holds whenever the quadratic module $M(g;h;c-f)$ is Archimedean.

It is also worth noting that the assumption
``the quadratic module $M(g;h;c-f)$ is Archimedean''
is weaker than the assumption ``the quadratic module $M(g;h)$ is Archimedean''
used in Putinar's Positivstellensatz. Indeed, obviously if
$M(g;h)$ is Archimedean then so is $M(g;h;c-f)$ whereas the converse is not true in general (see Example \ref{ex1}).


\subsection{Checking the Archimedean property}

We have seen that the quadratic module $M(g;h;c-f)$ is Archimedean if and only if
there exists $N>0$ such that the quadratic polynomial
$x\mapsto N-\|x\|^2$ belongs to $M(g;h;c-f)$.
This is equivalent to:
\[\inf\,\{\lambda\::\: \lambda-\|x\|^2\,\in\,M(g;h;c-f)\}\,<\,+\infty,\]
which, in turn, is equivalent to
\begin{equation}
\label{rho-k}
\rho_k\,:=\,\inf\,\{\lambda\::\: \lambda-\|x\|^2\,\in\,M_k(g;h;c-f)\}\,<\,+\infty
\end{equation}
for some $k\in\mathbb{N}$.

We note that for each fixed $k\in\mathbb{N}$, solving (\ref{rho-k})
reduces to solving a semi-definite program. And so
checking  whether the Archimedean property is satisfied
reduces to solving the hierarchy of semidefinite programs (\ref{rho-k}), $k\in\mathbb{N}$,
until $\rho_k<+\infty$ for some $k$.


\begin{example}
\label{ex1}
{\rm  Consider the following two-dimensional illustrative
example where $f(x_1,x_2)=x_1^2+1$, $g_1(x_1,x_2)=1-x_2^2$ and $g_2(x_1,x_2)=x_2^2-1/4$. Let $c=2$. The corresponding hierarchy $(\ref{rho-k})$ reads
\begin{eqnarray*}
\rho_k\,=\, \ \inf\{\lambda\::\: \lambda-\|x\|^2&=&\sigma_0+\sigma(2-f)+\sigma_1g_1+\sigma_2g_2 \\
& & \mbox{ for some }  \sigma, \sigma_0,\sigma_1,\sigma_2 \in \Sigma^2_k[x]\}.
\end{eqnarray*}
Using the following simple code
\begin{verbatim}
sdpvar x1 x2 lower;
f=x1^2+1;
g=[1-x2^2;x2^2-1/4;2-f];
h=x1^2+x2^2
[s1,c1]=polynomial([x1,x2],2)
[s2,c2]=polynomial([x1,x2],2)
[s3,c3]=polynomial([x1,x2],2)
F = [sos(lower-h-[s1 s2 s3]*g), sos(s1), sos(s2), sos(s3)];
solvesos(F,lower,[],[c1;c2;c3;lower]);
 \end{verbatim}
from the matlab toolbox Yamlip \cite{Yamlip1,Yamlip2}, we obtain
$\rho_2=2>0$, and an optimal solution
\begin{eqnarray*}
\sigma_0&=&1.325558711-1.3103 x_1^2-1.3408 x_2^2+0.4609x_1^4+0.3885x_1^2x_2^2+0.4761x_2^4\\
\sigma&=&0.3443415642+0.4609x_1^2+0.1947 x_2^2\\
\sigma_1&=&0.3300997245-(1.4964e-15)x_1+(1.4078e-15)x_2+0.1938 x_1^2+0.4761 x_2^2\\
&&+(9.0245e-15) x_1x_2\\
\sigma_2&=&(2.570263721e-11)+(1.0557e-16)x_1+(1.0549e-16)x_2+(2.6157e-11)x_1^2\\
&&+(2.5534e-11)x_2^2+(1.0244e-15) x_1x_2.
\end{eqnarray*}
So we conclude that $M(g;h;c-f)$ is Archimedean.
On the other hand, clearly, $f(x)>0$ for all $x \in K=\{x:g_i(x_1,x_2) \le 0, i=1,2\}=\{(x_1,x_2):x_1 \in \mathbb{R}, x_2 \in [-1,-1/2]\cup [1/2,1]\}$. Direct verification gives that $f$ is not coercive and $M(g_1,g_2)$ is not Archimedean (as $K$ is noncompact). Let $\bar x=(0,-1)$ and let $c=2>1=f(\bar x)$. We have
already shown that $M(g,h,c-f)$ is Archimedean.  Direct verification shows that
\[
x_1^2+1=\frac{1}{2}+ 2x_1^2(1-x_2^2)+2x_1^2(x_2^2-1/4)+ \frac{1}{2}[2-(x_1^2+1)].
\]
So, letting $\delta=\frac{1}{2}$, $\sigma_1(x)=2x_1^2$, $\sigma_2(x)=2x_1^2$ and $\sigma(x)=\frac{1}{2}$, we see that the following positivity certification holds
\[
f=\delta+\sigma_1g_1 +\sigma_2 g_2+\sigma(c-f).
\]
}\end{example}

So Example \ref{ex1} illustrates the case where even if $K$ is not compact and $f$ is not coercive,
still the quadratic module $M(g;h;c-f)$ is Archimedean.

We next provide an easily verifiable condition guaranteeing that the quadratic module
$M(g;h;c-f)$ is Archimedean in terms of coercivity of the functions involved.

\begin{proposition}\label{prop:1}
 Let $K$ be as in (\ref{set-K}), $x_0 \in K$ and let $c \in \mathbb{R}_+$ be such that $c>f(x_0)$.

Then, the quadratic module $M(g;h;c-f)$ is Archimedean if
there exist $\alpha_0,\lambda_j \ge 0$, $j=1,\ldots,m$, and $\mu_l \in \mathbb{R}$, $l=1,\ldots,p$, such that the polynomial $\alpha_0 f-\sum_{j=1}^m\lambda_j g_j-\sum_{l=1}^p \mu_l h_l$ is coercive.

In particular, $M(g;h;c-f)$ is Archimidean if $f$ is coercive\footnote{As shown in \cite{Pham}, for a convex polynomial $f$, the positive definiteness of the Hessian of $f$ at a single point guarantees coercivity of $f.$
}.
\end{proposition}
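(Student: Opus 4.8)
The plan is to verify the Archimedean property \emph{directly} from the definition recalled above, namely by exhibiting one polynomial lying in $M(g;h;c-f)$ whose superlevel set is compact. The key observation is that the generator $c-f$ of the quadratic module, together with the generators $g_j$ and $\pm h_l$, already allows us to assemble (up to sign and an additive constant) the coercive polynomial furnished by the hypothesis, and coercivity is exactly what forces compactness of a sublevel set.

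Concretely, I would let $p:=\alpha_0 f-\sum_{j=1}^m\lambda_j g_j-\sum_{l=1}^r\mu_l h_l$, which is coercive by assumption, and set
\[q\,:=\,\alpha_0 c-p\,=\,\alpha_0(c-f)+\sum_{j=1}^m\lambda_j g_j+\sum_{l=1}^r\mu_l h_l.\]
Then $q\in M(g;h;c-f)$, and this needs no computation: in the representation defining the quadratic module one takes the free SOS term to be $0$, the SOS weight on $c-f$ to be the nonnegative constant $\alpha_0$ (a square), the SOS weight on each $g_j$ to be the nonnegative constant $\lambda_j$ (a square), and the polynomial weight on each $h_l$ to be the constant $\mu_l\in\mathbb{R}$ (any polynomial is admissible there).

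It then remains to note that $\{x\in\mathbb{R}^n:q(x)\ge 0\}=\{x:p(x)\le\alpha_0 c\}$ is compact: it is closed because $p$ is continuous, and bounded because $p$ is coercive (a sequence $x_k$ in this set with $\|x_k\|\to\infty$ would contradict $p(x)\to+\infty$ as $\|x\|\to\infty$). By the definition of the Archimedean property this proves the first assertion, and the special case follows at once by choosing $\alpha_0=1$ and all $\lambda_j=\mu_l=0$, so that $p=f$. There is no genuine obstacle in this argument; the only point that needs care is the sign bookkeeping in the definition of $M(g;h;c-f)$ — so that $+\lambda_j g_j$ with $\lambda_j\ge 0$ and $+\mu_l h_l$ with $\mu_l$ unrestricted are indeed admissible summands — and the harmless remark that the borderline value $\alpha_0=0$ (where $q=\sum_j\lambda_j g_j+\sum_l\mu_l h_l$ and $\alpha_0 c=0$) is covered by the very same formula.
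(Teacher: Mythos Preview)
Your argument is correct and is essentially identical to the paper's own proof: you construct the same element $q=\alpha_0(c-f)+\sum_j\lambda_j g_j+\sum_l\mu_l h_l$ of the quadratic module (the paper calls this element $p$) and observe that its nonnegativity set is a sublevel set of the coercive polynomial, hence compact. The only cosmetic difference is that the paper also records that this set is nonempty (since $x_0$ lies in it), a point not actually needed for compactness.
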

\begin{proof}
To see  that the quadratic module $M(g;h;c-f)$ is Archimedean, note that from its definition,
\[
p:=\alpha_0 (c-f)+\sum_{j=1}^m\lambda_j g_j+\sum_{l=1}^p \mu_l h_l  \in M(g;h;c-f).
\]
Now, $\{x:p(x)\ge 0\}=\{x:\alpha_0 f(x)-\sum_{j=1}^m\lambda_j g_j(x)-\sum_{l=1}^p \mu_l h_l(x) \le \alpha_0 c\}$ is nonempty (as $x_0 \in  \{x:p(x)\ge 0\}$) and compact (by our coercivity assumption).
This implies that the quadratic module $M(g;h;c-f)$ is Archimedean.

The particular case when $f$ is coercive follows from the general case with $\alpha_0=1$ and $\lambda_j,\mu_l=0$ for all $j,l$.
\end{proof}

We now show how the coercivity of a nonconvex polynomial can easily be checked by solving a sequence of semi-definite programming problems.

\subsection{Checking the coercivity property}
For a non convex polynomial $f\in\mathbb{R}[x]_d$ with $d$ even, decompose $f$ as the sum
\[
f=f_0+f_1+\ldots,+f_d ,
\]
where each $f_i$, $i=0,1,\ldots,d$, is an homogeneous polynomial of degree $i$. Let
$x\mapsto \theta(x):=\Vert x\Vert^2-1$ and let $M(\theta)$ be the quadratic module
\[M(\theta)\,:=\,\{\sigma+\phi \,\theta\::\: \sigma\in\Sigma^2[x];\quad \phi\in\mathbb{R}[x]\}.\]
\begin{lemma}\label{l1}
If there exists $\delta>0$ such that $f_d(x) \ge \delta \|x\|^d$, then
$f$ is coercive and in addition:
\begin{eqnarray}
\label{aux1}
f_d(x) \ge \delta \|x\|^d&\Leftrightarrow&0<\sup_{\|x\|=1}\,\{\mu: f_d(x) - \mu \ge 0\}\\
\label{aux2}
&\Leftrightarrow&0<\sup\,\{\mu: f_d - \mu\,\in\,M(\theta)\}.
\end{eqnarray}
\end{lemma}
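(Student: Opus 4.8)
The plan is to prove the forward implication (coercivity) first, then establish the two equivalences, with the second equivalence being the place where Putinar's theorem on the sphere enters. For the coercivity claim, I would assume $f_d(x) \ge \delta\|x\|^d$ for some $\delta > 0$ and split $f = f_d + (f - f_d)$, where $f - f_d$ collects the terms of degree at most $d-1$. Writing $g := f - f_d \in \mathbb{R}[x]_{d-1}$, one has $|g(x)| \le C(1 + \|x\|^{d-1})$ for some constant $C > 0$ (bounding each monomial by $\|x\|$ to the appropriate power and summing absolute values of coefficients). Hence $f(x) \ge \delta\|x\|^d - C(1 + \|x\|^{d-1})$, and since $d \ge 1$ this lower bound tends to $+\infty$ as $\|x\| \to \infty$; in fact $d$ must be even for $f_d(x) \ge \delta\|x\|^d$ to hold for all $x$ (replacing $x$ by $-x$), consistent with the standing assumption. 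This gives coercivity of $f$.

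For the equivalence \eqref{aux1}, note that $f_d$ is homogeneous of degree $d$, so for any $x \ne 0$ we may write $x = \|x\| \cdot (x/\|x\|)$ and get $f_d(x) = \|x\|^d f_d(x/\|x\|)$. Thus $f_d(x) \ge \delta\|x\|^d$ for all $x$ is equivalent to $f_d(u) \ge \delta$ for all $u$ with $\|u\| = 1$. The unit sphere $S^{n-1}$ is compact, so $\mu^* := \min_{\|u\|=1} f_d(u)$ is attained; the condition ``$\exists \delta > 0$ with $f_d(x) \ge \delta\|x\|^d$'' is then exactly ``$\mu^* > 0$'', i.e. $0 < \sup_{\|x\|=1}\{\mu : f_d(x) - \mu \ge 0\}$, since that supremum equals $\mu^*$. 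This is the routine part.

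The main step is the equivalence \eqref{aux2}, relating the geometric condition on the sphere to membership in the quadratic module $M(\theta)$ generated by $\theta(x) = \|x\|^2 - 1$. The quadratic module $M(\theta)$ is Archimedean: indeed $-\theta = 1 - \|x\|^2 \in M(\theta)$ (take $\sigma = 1$, $\phi = -1$), so the superlevel set $\{x : 1 - \|x\|^2 \ge 0\}$ is compact, equivalently $2 - \|x\|^2 = 1 + (-\theta) \in M(\theta)$, giving the Archimedean bound directly. Note also that the basic closed set associated with the single equality constraint $\theta = 0$ is precisely $S^{n-1}$. Now if $\mu^* = \min_{\|x\|=1} f_d > 0$, then for any $\mu$ with $0 < \mu < \mu^*$ the polynomial $f_d - \mu$ is strictly positive on $S^{n-1}$, so by Putinar's Positivstellensatz (Theorem \ref{Putinar}) applied with $K = S^{n-1}$ and the Archimedean quadratic module $M(\theta)$, we get $f_d - \mu \in M(\theta)$, hence $\sup\{\mu : f_d - \mu \in M(\theta)\} \ge \mu^* > 0$. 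Conversely, if $f_d - \mu \in M(\theta)$ for some $\mu > 0$, write $f_d - \mu = \sigma + \phi\,\theta$ with $\sigma \in \Sigma^2[x]$; evaluating at any $x$ with $\|x\| = 1$ kills the $\theta$ term and gives $f_d(x) - \mu = \sigma(x) \ge 0$, so $f_d \ge \mu > 0$ on $S^{n-1}$, i.e. $\mu^* > 0$. Combined with \eqref{aux1}, this yields \eqref{aux2}. The only subtlety worth flagging is making sure the hypothesis of Putinar's theorem — strict positivity, not just nonnegativity — is met, which is why one passes to a value $\mu$ strictly below $\mu^*$ before invoking it; since the displayed statement uses $\sup$ rather than $\max$, this causes no loss.
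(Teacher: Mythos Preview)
Your proof is correct and follows essentially the same route as the paper: homogeneity reduces \eqref{aux1} to positivity of the sphere minimum $\mu^*=\min_{\|x\|=1}f_d$, and Putinar's theorem for the Archimedean module $M(\theta)$ yields \eqref{aux2}. The only cosmetic difference is in the coercivity step: the paper argues by contradiction, dividing $f(x_k)/\|x_k\|^d$ along an unbounded sequence and using homogeneity of each $f_i$, whereas you give the equivalent direct lower bound $f(x)\ge \delta\|x\|^d - C(1+\|x\|^{d-1})$.
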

\begin{proof} Assume that there exists $\delta>0$ such that $f_d(x) \ge \delta \|x\|^d$. To see that $f$ is coercive, suppose, on the contrary, that there exists $\{x_k\} \subseteq \mathbb{R}^n$ and $M>0$ such that $\|x_k\| \rightarrow \infty$ and $f(x_k) \le M$ for all $k \in \mathbb{N}$. This implies that
\begin{equation}\label{eq:contradiction}
\frac{f(x_k)}{\|x_k\|^d} \le \frac{M}{\|x_k\|^d} \rightarrow 0, \mbox{ as } k \rightarrow \infty.
\end{equation}
On the other hand, as each $f_i$ is a homogeneous function with degree $i$, for each $i=0,1,\ldots,d-1$ we have
\[
\frac{f_i(x_k)}{\|x_k\|^d}=f_i(\frac{x_k}{\|x_k\|}) \frac{1}{\|x_k\|^{d-i}} \rightarrow 0, \mbox{ as } k \rightarrow \infty.
\]
So, this together with the hypothesis gives us that, for sufficiently large $k$,
\begin{eqnarray*}
\frac{f(x_k)}{\|x_k\|^d}&=&\frac{f_0(x_k)}{\|x_k\|^d}+\frac{f_1(x_k)}{\|x_k\|^d}+\ldots +\frac{f_d(x_k)}{\|x_k\|^d} \ge  \frac{\delta}{2},
\end{eqnarray*}
which contradicts (\ref{eq:contradiction}). Hence, $f$ is coercive.

By homogeneity, the condition, $f_d(x)\geq\delta\Vert x\Vert^d$ for all $x\in\mathbb{R}^n$, is equivalent to the condition that
$f_d(x)\geq\delta$,  for all $x\in B:=\{x:\Vert x\Vert=1\}$. And so,
$0<\delta\leq\rho:=\sup_{x\in B}\{\mu\,:\,f_d(x)-\mu\geq0\}$. Conversely, if
$\rho>0$ then $f_d(x)\geq\rho \Vert x\Vert^d$ for all $x$ and so the equivalence (\ref{aux1}) follows.
Then the equivalence (\ref{aux2}) also follows  because $B$ is compact and
the quadratic module $M(\theta)$ is Archimedean.
\end{proof}

It easily follows from Lemma \ref{l1} that the sufficient coercivity condition that $f_d(\cdot) \ge \delta \|\cdot\|^d$ for some $\delta>0$ can be numerically checked by solving the following hierarchy of semidefinite programs:
\[\rho_k= \sup\{\mu: f_d-\mu \in M_k(\theta)\},\qquad k\in\mathbb{N},\]
until $\rho_k>0$ for some $k$.
The following simple example illustrates how to verify the coercivity of a polynomial by solving semidefinite programs.

\begin{example}
{\rm With $n=2$ consider the degree $6$ polynomial
\[x\mapsto f(x)\,:=\,x_1^6+x_2^6-x_1^3x_2^3+x_1^4-x_2+1.\]
To test whether $f$ is coercive, consider its highest degree term
 \[ x\mapsto f_6(x)\,=\,x_1^6+x_2^6-x_1^3x_2^3 \]
and the associated hierarchy of semidefinite programs:
\[\rho_k=\sup_{\mu,\phi,\sigma}\{\mu: f_6+\phi\, \theta - \mu=\sigma;\quad \phi \in \mathbb{R}_{2k}[x],\quad \sigma \in \Sigma^2_k[x]\},\quad k\in\mathbb{N}.\]
Running the following simple code
\begin{verbatim}
p=x^6+y^6-x^3*y^3;
g=[x^2+y^2-1]
[s1,c1]=polynomial([x,y],4)
F = [sos(p-lower-s1*g)];
solvesos(F,-lower,[],[c1;lower]);
 \end{verbatim}
from the SOS matlab toolbox Yamlip \cite{Yamlip1,Yamlip2}, one obtains
$\rho_4=0.125>0$, which proves that $f$ is coercive.
Indeed, from an optimal solution, one can directly check that
$x\mapsto f_6(x)-0.125(x_1^2+x_2^2)^3$ is an SOS polynomial of degree $6$.
So, $f_6(x) \ge 0.125(x_1^2+x_2^2)^3$, and
hence $f$ is coercive.
 \begin{verbatim}
u = p-0.125*(x^2+y^2)^3;
F = sos(u);
solvesos(F);
 \end{verbatim}
}\end{example}

\section{Conclusion}

In this paper we have first provided a simplified version of Krivine-Stengle's Positivstellensatz which holds generically.
The resulting positivity certificate is much simpler as it only involves an SOS polynomial
and an element of the quadratic module rather than two elements of the preordering.
And so it is also easier to check numerically by semidefinite programming. Inspired by this simplified form
we have also shown how to handle POPs on non compact basic semi-algebraic sets provided that
some quadratic module is Archimedean. The latter condition (or a sufficient coercivity condition)
can both be checked by solving a now standard hierarchy of semidefinite programs. The quadratic module is an easy and slight modification of the standard quadratic module when $K$ is compact, which shows that essentially
the non compact case reduces to the compact case when this Archimedean assumption is satisfied.
It is worth noting that the sufficient coercivity condition of the objective function $f$ to minimize is very natural in
many POPs as it simply means that $f(x)$ grows to infinity as $\|x\|\to \infty$.

\end{document}